\theoremstyle{plain}
\newtheorem{Thm}{Theorem}
\begin{document}

\title[why cigar]
{Why cigar can not be isometrically immersed into the 3-space }

\author{Li Ma, Anqiang Zhu}
\address{Department of mathematics \\
Henan Normal university \\
Xinxiang, 453007 \\
China} \email{lma@tsinghua.edu.cn}

\address{Department of mathematics \\
Wuhan university \\
Wuhan, 430072 \\
China} \email{aqzhu.math@whu.edu.cn}

\thanks{The research is partially supported by the National Natural Science
Foundation of China (N0.11271111)}

\begin{abstract}
In this paper, we study the question if there is an isometric immersion of the cigar soliton into $R^3$. We show that the answer is negative.
Similar result in higher dimensions is also true for steady Bryant solitons.

{ \textbf{Mathematics Subject Classification 2000}: 53C44,32Q20, 58E11}

{ \textbf{Keywords}: cigar soliton, Bryant soliton, isometric immersion}
\end{abstract}

 \maketitle

\section{Introduction}
  We consider in this short note the problem if there is a nontrivial n-dimensional steady Ricci soliton which can be embedded as a hypersurface in $R^{n+1}$. Recall that steady Ricci solitons are special solutions to Ricci flow introduced by R.Hamilton \cite{H} \cite{Cho06}.
  In dimension two, the only non-trivial complete Ricci soliton is the cigar. We show that it is impossible to realize it as a surface in $R^3$. The key step in proving it is to use the deep result of H.Wu \cite{W} about the convex surfaces. Similar result is also true for steady Bryant solitons higher dimensions. We believe that similar result is also true for radially symmetric expanding Ricci solitons. We shall use the notation $u=0(r)$ to denote by $C^{-1}r\leq u\leq Cr$ for some uniform constant $C>0$ and the uniform constant $C$ may vary from line to line.

\section{Cigar can not be immersed into $R^3$}\label{sect1}

Recall that the cigar soliton is a two dimensional Riemannian manifold $(R^2, g_\Sigma)$ with the Riemannian metric \cite{Cho06}\cite{H}
$$
g_\Sigma=\frac{dx^2+dy^2}{1+x^2+y^2}=\frac{dr^2+r^2d\theta^2}{1+r^2}.
$$
It has positive Gauss curvature
$$
K=\frac{2}{1+r^2}.
$$

If the cigar can be isometrically immersed into $R^3$. According to Theorem of Sacksteder-Heijenoort and the main theorem ($\delta$) of H.Wu in \cite{W}, it is the graph of a nonnegative strictly convex function $u$ defined in the plane $\{x_3=0\}$.

Recall that the induced metric of the graph of the function $z=u(x_1,x_2)$ is given by
$$
g=(\delta_{ij}+u_iu_j)dx^idx^j=g_{ij}dx^idx^j
$$
with its second fundamental form
$$
II=h_{ij}dx^idx^j,
$$
where $(x^i)=(x_i)$,
$u_i=\frac{\partial u}{\partial x^i}$, $F(x)=(x,u(x))$, $F_j(x)=e_j+u_j(x)e_{n+1}$,
$$
\nu=\frac{(-Du,1)}{\sqrt{1+|Du|^2}}
$$ etc, and
$$h_{ij}=(D_{F_i(x)}\nu, F_j(x))=\frac{-u_{ij}}{\sqrt{1+|D u|^2}}.
$$
Hence,
$$
II=h_{ij}dx^idx^j=\frac{D^2u}{\sqrt{1+|D u|^2}}
$$
and for $u=u(r)$,
$$
II=\frac{u_{rr}dr^2+ru_rd\theta^2}{\sqrt{1+u_r^2}}.
$$

Then the Gauss curvature of the immersed surface can be computed by
$$
K=\det(h_{ij})/\det(g_{ij})=\det(u_{ij})/(1+\nabla u|^2)^2.
$$

\begin{Thm}\label{mali}
The Cigar can not be immersed into $R^3$
\end{Thm}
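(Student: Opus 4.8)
The plan is to argue by contradiction, taking as input the structural conclusion already recorded above: if the cigar admits an isometric immersion into $R^3$, then by the Sacksteder--van Heijenoort theorem together with Wu's main theorem $(\delta)$ its image is a complete strictly convex surface, realized as the graph of a nonnegative strictly convex function $u$ over the whole plane $\{x_3=0\}$. I would first reduce to the rotationally symmetric case: since the cigar metric is invariant under the $SO(2)$-action in $\theta$, and a complete convex surface is rigid, this rotational isometry must be induced by a rigid motion of $R^3$ fixing the surface setwise, i.e.\ a rotation about an axis, which I normalize to be the $x_3$-axis. Hence the immersion is a surface of revolution and $u=u(r)$ depends only on the planar radius $r=\sqrt{x_1^2+x_2^2}$, with induced metric $(1+u_r^2)\,dr^2+r^2\,d\theta^2$.

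Next I would match this first fundamental form against the cigar metric written in its own polar coordinate, which to avoid a clash with the planar $r$ I denote $\rho$:
\[
g_\Sigma=\frac{d\rho^2+\rho^2\,d\theta^2}{1+\rho^2}.
\]
Comparing the $d\theta^2$ coefficients forces $r^2=\rho^2/(1+\rho^2)$, equivalently $r=\rho/\sqrt{1+\rho^2}$ and $\rho=r/\sqrt{1-r^2}$; comparing the $dr^2$ coefficients then yields $1+u_r^2=(1-r^2)^{-2}$, so that
\[
u_r=\frac{r\sqrt{2-r^2}}{1-r^2}.
\]

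The contradiction is now immediate. The relation $r=\rho/\sqrt{1+\rho^2}$ shows that as $\rho$ ranges over $[0,\infty)$ the planar radius $r$ ranges only over $[0,1)$: the entire image projects into the open unit disk and never reaches $r=1$. Equivalently, $u_r$ and hence $u$ blow up as $r\to 1^-$, so the surface is asymptotic to the cylinder $\{r=1\}$ and $u$ is defined only on $\{r<1\}$. This contradicts the conclusion that $u$ is defined on all of $\{x_3=0\}$, so no such immersion can exist.

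I expect the genuine obstacle to be the reduction to a surface of revolution: one must ensure that \emph{every} isometric immersion of the cigar, not merely a symmetric one, is forced to be rotationally symmetric, which is precisely where the rigidity of complete convex surfaces enters, and it is also where the whole-plane versus bounded-disk dichotomy becomes decisive. The remaining metric computation is routine, and as a consistency check one can feed the expression for $u_r$ into the Gauss-curvature identity $K=u_ru_{rr}/\big(r(1+u_r^2)^2\big)$ and recover $K=2(1-r^2)=2/(1+\rho^2)$, matching the cigar's curvature. The same scheme---reduce to rotational symmetry, match the warped-product metric, and exhibit a bounded image in the base hyperplane---should then handle the steady Bryant solitons in higher dimensions.
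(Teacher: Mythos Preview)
Your argument and the paper's begin identically: invoke Sacksteder--van Heijenoort and Wu to realize the immersion as the graph of a strictly convex function over the plane, reduce to rotational symmetry, and match the two warped-product metrics to obtain (in the paper's notation, with $r$ the cigar coordinate and $\rho$ the planar radius) $\rho=r/\sqrt{1+r^2}$ and $\sqrt{1+u_\rho^2}=1+r^2$.

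The contradiction you extract, however, is different from the paper's and is in fact the sound one. You observe that $\rho=r/\sqrt{1+r^2}<1$ for all $r$, so $u$ lives only on the open unit disk, contradicting the whole-plane conclusion from Wu. The paper instead computes the Gauss curvature of the graph and arrives at $K=1$. That computation rests on the line ``$u_\rho^2=r^2$'', which is a slip: from $\sqrt{1+u_\rho^2}=1+r^2$ one gets $u_\rho^2=(1+r^2)^2-1=r^2(r^2+2)$, and feeding this into $K=u_\rho u_{\rho\rho}\big/[\rho(1+u_\rho^2)^2]$ returns exactly $2/(1+r^2)$---precisely the consistency check you record. So the curvature route does not actually yield a contradiction; the genuine obstruction is the bounded projection, which is what you identified.

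On the reduction to a surface of revolution, which you rightly flag as the delicate point, the paper simply writes ``since $g=g_\Sigma$ is radially symmetric, we have $z=u(\rho)$'' with no further justification. Your appeal to rigidity of complete convex surfaces---so that each intrinsic rotation is induced by an ambient rotation fixing the surface---is the correct way to fill this, and makes your version more complete on that count as well.
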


\begin{proof} Assume that we can have such an immersion into $R^3$.

Since $g=g_\Sigma$ is radially symmetric, we have $z=u(\rho)$ and
$$
g=(1+u_\rho^2)d\rho^2+\rho^2d\theta^2
$$
where $u_\rho=\frac{\partial u}{\partial \rho}$, etc. Hence, we have
\begin{equation}\label{one}
(1+u_\rho^2)d\rho^2=\frac{dr^2}{1+r^2}
\end{equation}
and
\begin{equation}\label{two}
\rho^2=\frac{r^2}{1+r^2}.
\end{equation}
By (\ref{two}) we have
$$
\rho=\frac{r}{\sqrt{1+r^2}}, \ \ \ \frac{d\rho}{dr}=\frac{1}{(\sqrt{1+r^2})^3}
$$
By (\ref{one}) we have
$$
\sqrt{1+u_\rho^2}d\rho=\frac{dr}{\sqrt{1+r^2}},
$$
which implies that
$$
\sqrt{1+u_\rho^2}\cdot\frac{1}{(\sqrt{1+r^2})^3}=\frac{1}{\sqrt{1+r^2}}.
$$
Hence we have
$$
u_\rho^2=r^2
$$
and then
$$
u_\rho u_{\rho\rho}=r\frac{dr}{d\rho}={r}{(\sqrt{1+r^2})^3}.
$$
By direct computation we know that the second fundamental form can be written as
$$
II=\frac{1}{\sqrt{1+u_\rho^2}}[u_{\rho\rho}d\rho^2+\rho u_\rho d\theta^2].
$$
This would implies that the Gauss curvature $K$ is
$$
K=\frac{u_{\rho\rho}u_{\rho}\rho}{(1+u_\rho^2)^2\rho^2}=1,
$$
which is absurd. This completes the proof of Theorem \ref{mali}. \end{proof}

\section{Higher dimensional generalization}\label{sect2}
It is quite possible to show a higher dimensional analog of the result above. Namely, we may have
\begin{Thm}\label{wu} The n-dimensional radially symmetric Bryant soliton can not be isometrically immersed into $R^{n+1}$.

\end{Thm}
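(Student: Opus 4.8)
The plan is to mirror the two–dimensional argument exactly, now working with the $n$-dimensional Bryant soliton written in geodesic polar form. First I would recall that the steady Bryant soliton is a rotationally symmetric metric on $\mathbb{R}^{n}$ of the form $g_{B}=ds^{2}+\phi(s)^{2}g_{S^{n-1}}$ with positive sectional curvatures and a known warping profile $\phi$; equivalently one can parametrize by the radial coordinate as $g_{B}=\psi(r)\,dr^{2}+r^{2}g_{S^{n-1}}$. The point of the Sacksteder--van~Heijenoort theorem together with H.~Wu's main theorem is that any isometric immersion of a complete simply connected manifold of nonnegative curvature into $\mathbb{R}^{n+1}$ realizes it as the boundary of a convex body, hence as the graph of a convex function $u$; by rotational symmetry $u=u(\rho)$, where $\rho$ is the Euclidean radial coordinate in the hyperplane $\{x_{n+1}=0\}$.

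Next I would set up the matching of metrics. The induced metric of the graph $x_{n+1}=u(\rho)$ over the hyperplane is
\begin{equation}\label{hdmetric}
g=(1+u_{\rho}^{2})\,d\rho^{2}+\rho^{2}g_{S^{n-1}},
\end{equation}
so comparing the spherical parts forces $\rho^{2}=r^{2}/h(r)$ for the appropriate warping function $h$ coming from the Bryant profile, and comparing the radial parts gives $(1+u_{\rho}^{2})\,d\rho^{2}=\psi(r)\,dr^{2}$. Exactly as in the cigar case, these two relations determine $u_{\rho}$ and $u_{\rho\rho}$ explicitly in terms of $r$, so that the second fundamental form of the graph, which for a rotationally symmetric convex graph has the diagonal form
\begin{equation}\label{hdII}
II=\frac{1}{\sqrt{1+u_{\rho}^{2}}}\bigl[u_{\rho\rho}\,d\rho^{2}+\rho u_{\rho}\,g_{S^{n-1}}\bigr],
\end{equation}
can be written out fully. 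The sectional curvature of the plane spanned by two spherical directions is then $(h_{ij})$-controlled and turns out to be a single explicit constant (or a quantity incompatible with the Bryant curvature), just as $K=1$ emerged in dimension two.

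The key step, and the one I expect to be the main obstacle, is the last one: for the cigar the computation collapsed miraculously to the constant $1$, contradicting $K=2/(1+r^{2})\to 0$. In the Bryant case the analogous collapse is no longer to a constant, so I would instead compare the \emph{two distinct} sectional curvatures that a rotationally symmetric hypersurface graph always carries, namely $\kappa_{\mathrm{rad}}$ (radial planes) and $\kappa_{\mathrm{sph}}$ (spherical planes), against the corresponding curvatures of the Bryant soliton, and derive a contradiction from their ratio or from their prescribed asymptotics as $r\to\infty$. Concretely, a convex graph forces both principal curvatures to have a fixed sign and a definite decay rate tied to $u_{\rho}$, whereas the Bryant soliton's sectional curvatures decay like $O(1/s)$ along the radial ray while the warping $\phi(s)$ grows like $\sqrt{s}$; these two asymptotic profiles cannot be simultaneously produced by a single convex profile $u$. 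Making the incompatible asymptotics precise (rather than hoping for an exact algebraic identity as in $n=2$) is the real content, and it is where the explicit form of the Bryant profile $\phi$ must enter.
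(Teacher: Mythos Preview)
Your plan is essentially the paper's own argument: reduce via Sacksteder--van Heijenoort and Wu to a rotationally symmetric convex graph $u=u(\rho)$, match the metric to the Bryant warped product to read off the asymptotics of $u_\rho$, and then obtain a contradiction by comparing the sectional curvatures forced by the graph against the known Bryant asymptotics. The paper's treatment of Theorem~\ref{wu} is itself only a sketch, so in that sense you are at the same level of detail.

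The one place the paper is more specific than you is the endgame. You leave open which curvature comparison will produce the contradiction and speak of the ratio $\kappa_{\mathrm{rad}}/\kappa_{\mathrm{sph}}$ in general terms. The paper instead singles out the \emph{radial} sectional curvature $k_1$: from $w(r)=\rho$ and $dr=\sqrt{1+u_\rho^2}\,d\rho$ together with the Bryant asymptotic $w(r)=0(r^{1/2})$ one gets $r=0(\rho^2)$, $u_\rho=0(\rho)$, $u_{\rho\rho}=0(1)$, and then the graph expression for $k_1$ yields a decay rate $0(r^{-n})$, which for $n\ge 3$ is incompatible with the Bryant value $k_1=-w''/w=0(r^{-2})$. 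So rather than a ratio argument or a delicate balancing of both curvatures, the paper simply computes one curvature two ways and compares exponents. If you want your write-up to be complete you should carry out exactly this step; your proposal currently flags it as ``the real content'' but stops short of doing it.
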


Recall that the n-dimensional Bryant soliton is $(R^n, g)$ ($n\geq 2$) with its Riemannian metric \cite{Cho06}
$$
g=dr^2+w(r)^2d\theta^2,
$$
where $w(r)$ is a smooth function with $w(0)=0$, $w(r)=0(r^{1/2})$, and $d\theta^2$ is the metric on $S^{n-1}$. It is well-known that it has its positive sectional curvatures
$$k_1=-\frac{w{"}}{w}=0(r^{-2}), \ \ \ k_2=\frac{1-(w')^2}{w^2}=0(r^{-1}).
$$
where $k_1$ is the curvature for the planes tangent to the radial direction $e_1=\partial_r$ and $k_2$ is the curvature for the planes tangent to the sphere.

If the n-dimensional Bryant soliton  can be imbedded into $R^{n+1}$, then we can use H.Wu's result \cite{W} as above to have it as the graph of a strictly convex radially symmetric function $u=u(x)=u(\rho), x\in R^3, \rho=|x|$. Then we have $g=(1+u_\rho^2)d\rho^2+\rho^2d\theta^2$ and
$$
dr=\sqrt{1+u_\rho^2}d\rho, \ \ w(r)=\rho=0(r^{1/2}).
$$
Hence, $r=0(\rho^2)$ and $u_\rho=0(\rho)$. We then have $u_{\rho\rho}=0(1)$.
Recall that using the components of the second fundamental form $(h_{ij})$ and $|g|=\rho^{2(n-1)}(1+u_\rho^2)$, the radial Riemannian curvature $k_1$ can also be written
$$
k_1=\frac{R_{1212}}{|g|}=\frac{u_{\rho\rho}\rho u_\rho}{\rho^{2n-2}(1+u_\rho^2)^2}=0(\rho^{-2n})=0(r^{-n}), \ \ \
$$
We may use this to find a contradiction as above.


\begin{thebibliography}{20}


\bibitem{Cho06}
 B.Chow, P.Lu, L.Ni, \emph{Hamilton's Ricci Flow}. Science
Press. American Mathematical Society, Beijing,Providence, 2006.



\bibitem{H}
R. S. Hamilton, \emph{Formation of singularities in the Ricci flow},
Surveys in Diff. Geom. 2(1995)7-136


\bibitem{W}
H.Wu, \emph{The spherical images of convex hypersurfaces}, J. Diff. Geom. 9(1974)279-290


\end{thebibliography}
\end{document}